\documentclass[12pt]{article}

\usepackage[T1]{fontenc}
\usepackage{lmodern}
\usepackage[margin=1.15in]{geometry}
\usepackage{amsmath,amssymb,amsthm,mathtools}
\usepackage{microtype}
\usepackage[hidelinks]{hyperref}

\newtheorem{theorem}{Theorem}[section]
\newtheorem{lemma}[theorem]{Lemma}
\newtheorem{corollary}[theorem]{Corollary}
\newtheorem{conjecture}[theorem]{Conjecture}

\newtheorem{claim}[theorem]{Claim}

\title{Large induced subgraphs with $k$ vertices of maximum degree}
\author{Zhen Liu\footnote{Email: 1552580575@qq.com}, ~Qinghou Zeng\footnote{Research supported by National Key R\&D Program of China (Grant No. 2023YFA1010202) and National Natural Science Foundation of China (Grant No. 12371342). Email: zengqh@fzu.edu.cn (Corresponding
author)}\\
{\small Center for Discrete Mathematics, Fuzhou University, Fujian, 350003, China}}
\date{}
\begin{document}

\maketitle

\begin{abstract}
We prove that, for every integer \(k\ge 2\), there exists a constant \(c_k>0\) such that every graph on \(n\ge R(k,k)\) vertices with maximum degree \(\Delta\) contains an induced subgraph on at least $n-c_k\sqrt{\Delta}$ vertices whose maximum degree is attained by at least \(k\) vertices. This confirms a conjecture of Caro and Yuster in strong form.
\end{abstract}

\section{Introduction}\label{Intro}

All graphs considered are finite, simple and undirected. The \emph{repetition number} $\operatorname{rep}(G)$ of a graph $G$ is the maximum multiplicity of a vertex degree; trivially $\operatorname{rep}(G)\ge2$ for $|V(G)|\ge2$. Repeated degrees have been studied extensively (see, e.g., \cite{BalisterEtAl2013,BollobasScott1997,CaroWest2009}).

Graphs of arbitrarily large order can have $\operatorname{rep}(G)=2$, so it is natural to ask how many vertices must be deleted to force several vertices of the remaining induced subgraph to share the same degree. Caro, Shapira and Yuster~\cite{CaroShapiraYuster2014} proved that for every $k$ there exists a constant $C(k)$ such that every $n$-vertex graph contains an induced subgraph on at least $n-C(k)$ vertices with at least $\min\{k,n-C(k)\}$ vertices of equal degree. Thus, if the common degree need not be maximal, a bounded number of deletions suffices.

Caro and Yuster~\cite{CaroYuster2010} considered the more restrictive problem where the common degree must be the maximum degree of the remaining graph. For $k\ge2$ and an $n$-vertex graph $G$ with $n\ge R(k,k)$, let $f_k(G)$ be the minimum number of vertices to delete so that the resulting induced subgraph has its maximum degree attained by at least $k$ vertices (the Ramsey condition guarantees that $f_k(G)$ is well‑defined). They constructed graphs with $f_2(G)\ge(1-o(1))\sqrt n$ and conjectured $f_k(G)\le c_k\sqrt n$ for every $k$. A stronger version, replacing $n$ by $\Delta(G)$, was proposed by Caro, Lauri and Zarb~\cite{CaroLauriZarb2018}.

\begin{conjecture}[Caro, Lauri and Zarb \cite{CaroLauriZarb2018}]\label{conj:caro-yuster}
For every integer $k\ge2$ there is a constant $a_k$ such that every graph $G$ on at least $R(k,k)$ vertices satisfies $f_k(G)\le a_k\sqrt{\Delta(G)}$.
\end{conjecture}

Caro, Lauri and Zarb~\cite{CaroLauriZarb2018} proved the bound $f_k(G)\le (k-1)\Delta(G)$, which guarantees that the following maximum is well defined.  Set
$g(k,\Delta)=\max\{f_k(G):|V(G)|\ge R(k,k),\ \Delta(G)\le\Delta\}$.
For $\Delta\ge1$, they determined exactly
$g(2,\Delta)=\bigl\lceil(-3+\sqrt{8\Delta+1})/2\bigr\rceil$.
The square‑root order is already necessary for $k=2$, and analogous constructions in the same paper give
$g(k,\Delta)=\Omega_k(\sqrt{\Delta})$ for every fixed $k$.
Gir\~ao and Popielarz~\cite{GiraoPopielarz2018} proved an approximate version: for every $k$ there are constants $b_k,c_k$ such that every graph $G$ contains an induced subgraph $H$, obtained by deleting at most $b_k\sqrt{\Delta(G)}$ vertices, in which $k$ vertices share the same degree and that degree is at least $\Delta(H)-c_k$. Further results for forests and graphs without short cycles appear in F\"urst et al.~\cite{FurstEtAl2020}.
In this paper we fully confirm Conjecture~\ref{conj:caro-yuster}.
\begin{theorem}\label{thm:main}
For every integer $k\ge2$ there exists a constant $A_k$ such that every graph $G$ on at least $R(k,k)$ vertices satisfies $f_k(G)\le A_k\sqrt{\Delta(G)}$.
\end{theorem}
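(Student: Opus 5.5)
We argue by induction on $k$, following the strategy of Gir\~ao and Popielarz~\cite{GiraoPopielarz2018} but replacing their final step. That step only yields $k$ vertices of degree at least $\Delta(H)-c_k$; ours forces the common degree to be exactly the maximum. Write $\Delta=\Delta(G)$. If $\Delta$ is bounded by a constant depending on $k$, the bound $f_k(G)\le (k-1)\Delta(G)$ of Caro, Lauri and Zarb already gives the result. So assume $\Delta$ is large, and set $t=\lceil C\sqrt{\Delta}\rceil$ for a large constant $C=C(k)$.

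The first step is to produce a controlled top layer. Order the vertices by degree, $d_1\ge d_2\ge\dots$. Then look for an index $i\le t$ such that either (a) at least $k$ vertices have degree exactly $d_1$ after deleting fewer than $t$ high-degree vertices, or (b) there is a gap structure. The basic move is the following: if fewer than $k$ vertices attain the maximum, delete one of them, or a neighbour of several of them. Each such deletion lowers the maximum degree or merges degree classes. A naive iteration costs $\Theta(\Delta)$ deletions. The $\sqrt{\Delta}$ saving comes from the standard pigeonhole used in the $k=2$ construction. Among the top $t$ vertices the degrees span a range $r$. If $r\le t$, many top vertices lie in a window of width about $\sqrt{\Delta}$. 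If $r>t$, we can delete the top vertices, of which there are at most $t$, and drop $\Delta$ by at least $t$ each round. Since $t^2\ge C^2\Delta$, this can happen only $O(1)$ times in aggregate accounting. So we may reduce to the situation where a set $S$ of $m=m(k)$ vertices (with $m\ge R(k',k')$, where $k'$ is large in terms of $k$) has degrees within an additive $O(\sqrt{\Delta})$ window at the top, and all other vertices have degree below the window.

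The second step is to equalise the degrees within $S$ by deleting $O(\sqrt{\Delta})$ vertices outside $S$. By Ramsey's theorem, pass to $S'\subseteq S$ of size $k'$ that is a clique or an independent set, so that deletions inside $S'$ affect all of $S'$ uniformly. For vertices $u\in S'$, consider the excess $e(u)=d(u)-\min_{S'} d$, which is $O(\sqrt{\Delta})$. We want to delete outside vertices $X$ with $|N(u)\cap X|=e(u)+s$ for every $u\in S'$ and a common shift $s$. This is a discrepancy and design problem on the bounded-size family of neighbourhoods restricted to $S'$. Partition $V\setminus S'$ into $2^{k'}$ atoms by their neighbourhood pattern on $S'$. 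Each vertex $u$ has degree at least $\Delta-O(\sqrt\Delta)$, so every $u$ sees many outside vertices. We then solve an integer linear system: choose counts $x_P\le |{\rm atom}_P|$ with $\sum_{P\ni u}x_P=e(u)+s$. If some atoms are too small to solve it, pass to a sub-collection of $S'$ of size $k$, again via Ramsey or pigeonhole on the atom structure. This is where induction on $k$ and the large $k'$ are used.

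The third step, and the main obstacle, is to make sure the equalised common degree is the \emph{maximum}. Vertices outside $S'$ whose degrees were just below the window must not now exceed the new common value. Deleting $X$ lowers degrees of $S'$ by about $e(u)+s=O(\sqrt\Delta)$, but vertices outside may lose nothing. My first attempt would be an iterative potential argument. Any outside vertex $w$ that exceeds the common value either joins the top class, which only helps after re-equalising, or is deleted. We charge each deletion to a drop of the top value. Since the window width is $O(\sqrt\Delta)$ and each round costs $O(1)$ deletions while lowering the threshold, the total stays $O(\sqrt\Delta)$. If this charging fails, the fallback is to choose the window in step one with an additional empty gap of width $\gg\sqrt\Delta$ below it, found by pigeonhole over $O(\sqrt\Delta)$ candidate levels. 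Then outside vertices cannot catch up, and the constant $A_k$ absorbs the cost of finding the gap.
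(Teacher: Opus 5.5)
There is a genuine gap, and it sits exactly where the difficulty of the conjecture lies.

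Your first step does not achieve $O(\sqrt{\Delta})$. Deleting at most $t$ top vertices to lower the maximum degree by at least $t$ is one deletion per unit of degree drop, which is the same rate as the naive process. The inequality $t^2\ge C^2\Delta$ only bounds the number of such rounds by $\Delta/t=O(\sqrt{\Delta})$, not by $O(1)$, so the total cost can be of order $\Delta$. A $\sqrt{\Delta}$ bound needs a quadratic relation: order-$r$ deletions must be paid for by a degree spread of order $r^2$. Nothing in your plan produces one.

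Your third step is then left open, and neither of your two ideas closes it. In the charging argument, a re-equalisation costs up to $c_2$ times the current imbalance, not $O(1)$. The deletions used to clear the newly exposed band below the common value hit $S'$ unevenly and recreate an imbalance of the same order. Re-equalising then lowers the common value again by a comparable amount, so nothing makes the process contract. The fallback is unjustified for three reasons:
\begin{itemize}
\item the degree sequence of $G$ need not contain an empty band of width $\gg\sqrt{\Delta}$ below the window, since vertices may occupy every level and $n$ is unbounded;
\item pigeonhole over levels yields sparse bands, not empty ones;
\item gaps in the raw degrees $d_G$ are not robust, because deleting a set $D$ of about $C\sqrt{\Delta}$ vertices lowers each remaining degree by $d_D(v)$, which can be as large as $|D|$ and varies from vertex to vertex.
\end{itemize}
The induction on $k$ you announce is also never actually used.

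The missing idea is a \emph{certified} gap. The paper fixes $t$, chooses $S$ with $|S|=n-t$ minimising $e(G[S])$, and sets $D=V(G)\setminus S$. The exchange inequality $d_S(u)\le d_S(v)-\mathbf{1}_{uv}$ for $u\in S$, $v\in D$ then guarantees that after deleting all of $D$, every remaining vertex has degree at most $\min_{v\in D}d_S(v)$. Suppose a Ramsey-homogeneous $k$-set $X\subseteq D$ has $d_S$-values in $[a,a+b]$ with $a-\min_D d_S\ge c_2b+k$. Reinsert $X$ and equalise it by deleting at most $c_2b$ vertices of $S$. Then $X$ stays at the maximum, at total cost $|D|-k+c_2b$.

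The equalising step is your second step. It is fine once you invoke the Caro--Shapira--Yuster lemma, as in Lemma~\ref{lem:degree-balancing}. Your atom/linear-system sketch alone does not show that a solution of total size $O(b)$ exists.

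Now take $|D|=\lfloor r/2\rfloor$ and sort $D$ by $d_S$ into blocks of size $R(k,k)$. If the above fails for every block, the block heights must grow geometrically up to $r/2$, and every later block must have width greater than $r/(4c_2)$. Hence $\Delta(G)=\Omega_k(r^2)$ whenever $s_k(G)>r$.

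Getting the heights started requires that no $d_S$-value is shared by many vertices of $D$. In that degenerate case no height is available, and reinserting $k$ equal vertices can push vertices of $S$ above them. Your plan has no counterpart for this case. The paper handles it with a separate hypergraph-Ramsey argument (Lemma~\ref{lem:exact-independent} and Corollary~\ref{cor:bounded-multiplicity}).
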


For technical convenience we extend the definition to all graphs. Call $Z\subseteq V(G)$ \emph{successful} if either $|G-Z|<k$ or at least $k$ vertices of $G-Z$ have degree $\Delta(G-Z)$, and let $s_k(G)$ be the minimum size of a successful set. If $|V(G)|\ge R(k,k)$, Ramsey's theorem yields a $k$-vertex clique or independent set, so deleting all but those $k$ vertices gives a successful set of size $|V(G)|-k$, while any set leaving fewer than $k$ vertices needs $|V(G)|-k+1$ deletions. Hence a minimum successful set must satisfy the second condition, giving $s_k(G)=f_k(G)$. Thus it suffices to prove $s_k(G)\le A_k\sqrt{\Delta(G)}$ for every graph $G$.

\paragraph{Notation.}
For a positive integer $q$, set $[q]=\{1,\dots,q\}$.
For a graph $G$, $V(G)$ and $E(G)$ are its vertex set and edge set, $|V(G)|$ is its order,
$d_G(v)$ is the degree of $v$, and $\Delta(G)$ is the maximum degree.
If $A\subseteq V(G)$, then $G[A]$ denotes the subgraph induced by $A$, and $G-A$ means $G[V(G)\setminus A]$.
The neighborhood of $v$ in $G$ is $N_G(v)$; when the graph is clear we simply write $N(v)$,
and for a set $A\subseteq V(G)$ we write $d_A(v)=|N(v)\cap A|$.
The number of edges with both ends in $A$ is $e(A)$, and for disjoint sets $A,B\subseteq V(G)$,
$e(A,B)$ is the number of edges between them.
Suppose that $u$ and $v$ are two vertices of $G$, and let $\mathbf{1}_{uv}=1$ if $uv\in E(G)$ and $\mathbf{1}_{uv}=0$ otherwise.
For a vector $z=(z_1,\dots,z_d)$, $\|z\|_\infty=\max_{1\le i\le d}|z_i|$.
For positive integers $a,b$, the Ramsey number $R(a,b)$ is the smallest integer $m$ such that every graph on $m$ vertices contains a clique of order $a$ or an independent set of order $b$.

\section{Proof of the Main Theorem}
Our proof requires the following lemma.

\begin{lemma}[Caro, Shapira and Yuster~\cite{CaroShapiraYuster2014}]
\label{lem:caro}
For positive integers $r, d, q$ the following holds. 
Any sequence of at least $(\lceil q/r \rceil + 2)(2rd+1)^d$ 
elements of the  $\mathbb{Z}^d \cap [-r,r]^d$
whose sum, denoted by $z$, is in $[-q,q]^d$ 
contains a subsequence of length at most $(\lceil q/r \rceil + 2)(2rd+1)^d$ 
whose sum is $z$.
\end{lemma}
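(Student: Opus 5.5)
The plan is to reduce the lemma to finding a nonempty zero-sum subsequence. Let $M=(\lceil q/r\rceil+2)(2rd+1)^d$ and let $v_1,\dots,v_N$ with $N\ge M$ be the given elements of $\mathbb{Z}^d\cap[-r,r]^d$, whose sum is $z\in[-q,q]^d$. Deleting a nonempty subsequence with sum $0$ leaves a shorter sequence that still has sum $z$. Iterating, it therefore suffices to prove the following claim.

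\emph{Claim.} Every sequence of more than $M$ such vectors whose sum lies in $[-q,q]^d$ contains a nonempty zero-sum subsequence (in fact a consecutive block after a suitable reordering).

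Once the length drops to at most $M$, the remaining subsequence is the one we want. Note that the hypothesis "sum in $[-q,q]^d$" is preserved at every step, because the sum stays equal to $z$.

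To prove the claim, I would reorder the sequence so that its partial sums $s_0=0,s_1,\dots,s_N=z$ stay close to the segment $\{tz: t\in[0,1]\}$. Set $w_i=v_i-z/N$, so that $\sum_i w_i=0$. By a Steinitz-type rearrangement lemma (the Grinberg--Sevastyanov form, which gives constant $d$ in any norm), there is an ordering with
\[
\|s_j-(j/N)z\|_\infty \le d\,\max_i\|w_i\|_\infty \quad\text{for all } j.
\]
Alternatively, one can build the ordering greedily, at each step choosing the next vector so that the deviation from the line stays bounded. Every partial sum is then an integer point within sup-distance about $rd$ of the segment from $0$ to $z$.

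Next I would cover this neighbourhood of the segment by boxes. The segment has sup-length at most $q$. Cutting it into $\lceil q/r\rceil$ pieces of sup-length at most $r$, and adding slack at the two ends, the integer points near the segment can be covered by $\lceil q/r\rceil+2$ cubes. The aim is to make each cube contain at most $(2rd+1)^d$ lattice points, so that the partial sums take at most $M$ distinct values. Since there are $N+1>M$ partial sums, two of them coincide, say $s_i=s_j$ with $i<j$. Then $v_{i+1}+\dots+v_j=0$ is the desired nonempty zero-sum block.

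The main obstacle is making the constants match exactly $(\lceil q/r\rceil+2)(2rd+1)^d$. A black-box Steinitz bound applied to the $w_i$ gives deviation up to $d(r+q/N)$, which is slightly larger than $rd$. To fix this, I would instead run a greedy ordering directly on the integer vectors $v_i$. The idea is to track the deviation only in directions transverse to $z$, and to use the components along $z$ to decide which cube a partial sum falls into. If the exact constant still resists, I would accept a slightly worse constant depending only on $r,d,q$; this suffices for all uses in the main theorem.
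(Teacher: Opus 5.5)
Your reduction is sound: deleting a nonempty zero-sum subsequence keeps the sum equal to $z$. So it suffices to show that more than $M=(\lceil q/r\rceil+2)(2rd+1)^d$ such vectors with sum in $[-q,q]^d$ contain a nonempty zero-sum subsequence. Grinberg--Sevastyanov applied to $w_i=v_i-z/N$ also does give an ordering with $\|s_j-(j/N)z\|_\infty\le\delta:=d(r+q/N)$.

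The gap is the counting step, which is exactly where the constant has to come from. You need cubes with at most $(2rd+1)^d$ lattice points each. But a cube that is to capture every possible partial sum attached to one piece of the segment must contain at least $\lfloor r+2\delta\rfloor\ge 2rd+r$ consecutive integer values in the coordinate where that piece has extent $r$ (this happens when $\|z\|_\infty=q$ and $r\mid q$). So its side is at least $2rd+r-1$, and for every $r\ge2$ it contains more than $(2rd+1)^d$ lattice points.

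Reaching $M$ along your route would need a real lattice-point count for the whole tube around the segment, exploiting overlaps between pieces, and you do not supply one. The suggested greedy ordering that ``tracks only the transverse deviation'' is not an argument, and settling for a worse constant proves a different statement.

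Done carefully, your route gives the bound $(\lceil q/r\rceil+1)(4rd+r+1)^d$: $q/N<r$ yields $\delta<2rd$, and every $(j/N)z$ lies within sup-distance $r/2$ of some $(k/\lceil q/r\rceil)z$. That would suffice for the only use in this paper, Lemma~\ref{lem:bounded-representation}, which needs just some $c_1(d)$. Note that the paper does not prove the present lemma at all; it quotes it from Caro, Shapira and Yuster.

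The missing idea is to make the total sum zero by appending integer vectors rather than by shifting. Put $p=\lceil q/r\rceil$. Since $|z_i|\le q\le pr$ in every coordinate, there are $u_1,\dots,u_p\in\mathbb{Z}^d\cap[-r,r]^d$ with $u_1+\dots+u_p=-z$. The $N+p$ vectors $v_1,\dots,v_N,u_1,\dots,u_p$ have sup-norm at most $r$ and sum $0$. Grinberg--Sevastyanov then yields an ordering all of whose partial sums are integer points of $[-rd,rd]^d$, a set of exactly $(2rd+1)^d$ points, with no shift and no tube to count.

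The $p$ auxiliary vectors cut this ordering into at most $p+1$ maximal runs of original vectors. Since $N>M\ge(p+1)(2rd+1)^d$, some run has length $L\ge(2rd+1)^d$. The $L+1$ partial sums from just before that run to its end must contain a repetition, and the block between two equal partial sums is a nonempty zero-sum set of original vectors. Combined with your reduction, this proves the lemma, even with $\lceil q/r\rceil+1$ in place of $\lceil q/r\rceil+2$.
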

The following lemma differs only very slightly from the $r=1$ case of Lemma~\ref{lem:caro}; for the sake of rigor, we present a short proof.
\begin{lemma}\label{lem:bounded-representation}
For every integer $d\ge1$ there exists a constant $c_1=c_1(d)$ such that the following holds.
Let $v_1,\dots,v_N\in\{-1,0,1\}^d$, set $z=\sum_{i=1}^N v_i$, and suppose $\|z\|_\infty\le b$ for some integer $b\ge0$.
Then there is a subcollection of at most $c_1 b$ vectors whose sum equals $z$.
\end{lemma}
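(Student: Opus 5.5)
We reduce Lemma~\ref{lem:bounded-representation} to the case $r=1$, $q=b$ of Lemma~\ref{lem:caro}, handling separately the two small cases that lemma does not cover. The only mismatch is that Lemma~\ref{lem:caro} requires $q\ge1$ and asserts nothing about sequences shorter than its threshold $M(q):=(q+2)(2d+1)^d$. The constant will therefore be chosen as $c_1:=3(2d+1)^d$, and the proof splits into the cases $b\ge1$ and $b=0$.

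\textbf{Case $b\ge1$.} Put $q=b$ and $r=1$, so $\lceil q/r\rceil=b$ and the threshold is $M(b)=(b+2)(2d+1)^d\le 3b(2d+1)^d=c_1 b$, using $b+2\le 3b$ for $b\ge1$. If $N\ge M(b)$, then since every $v_i\in\{-1,0,1\}^d=\mathbb{Z}^d\cap[-1,1]^d$ and $z\in[-b,b]^d$, Lemma~\ref{lem:caro} gives a subsequence of length at most $M(b)\le c_1b$ with sum $z$. If $N<M(b)$, the whole collection already has size less than $c_1 b$ and sums to $z$.

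\textbf{Case $b=0$.} Here $z=0$, and the empty subcollection has sum $0$ and size $0=c_1\cdot 0$. This is the only place where the wording ``at most $c_1b$'' needs care, and allowing the empty subcollection settles it.

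There is no real obstacle. The substance lies entirely in Lemma~\ref{lem:caro}, and all that must be checked is that the constant absorbs the additive ``$+2$'' uniformly in $b\ge1$ and that $b=0$ is treated separately. If one preferred a self-contained argument, the natural route would be a Steinitz-type rearrangement: order the $v_i$ so that every partial sum stays within $O_d(1)$ of the segment $[0,z]$, then use pigeonhole on the partial sums to cut out zero-sum blocks until at most $O_d(b)$ vectors remain. With Lemma~\ref{lem:caro} available, this is unnecessary.
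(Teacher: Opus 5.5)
Your proposal is correct and follows the paper's proof essentially verbatim. It has the same case split ($b=0$ via the empty subcollection; for $b\ge1$, either apply Lemma~\ref{lem:caro} with $r=1$, $q=b$ when $N\ge(b+2)(2d+1)^d$, or take all $N$ vectors otherwise) and the same constant $c_1=3(2d+1)^d$.
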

\begin{proof}
If $b=0$, then $z=0$ and the empty subcollection works.
Assume $b\ge1$. If $N\ge (b+2)(2d+1)^d$, then Lemma~\ref{lem:caro}
applied with $r=1$ and $q=b$ yields a subcollection of at most
$(b+2)(2d+1)^d$ vectors summing to $z$. If $N<(b+2)(2d+1)^d$,
taking all $N$ vectors already gives a subcollection of size
$N\le (b+2)(2d+1)^d$. Since $b\ge1$,
$(b+2)(2d+1)^d\le 3(2d+1)^d\,b$.
Thus in either case we obtain a subcollection of at most
$3(2d+1)^d\,b$ vectors summing to $z$, so the lemma holds with
$c_1(d)=3(2d+1)^d$.
\end{proof}
The proof of the following lemma is inspired by Theorem~1 of Gir{\~a}o and Popielarz~\cite{GiraoPopielarz2018}.
\begin{lemma}\label{lem:degree-balancing}
For every integer $k\ge 2$, there exists a constant $c_2 = c_2(k)$ such that the following holds. Let $H$ be a graph, let $b$ be a nonnegative integer, and let $X = \{x_1, \dots, x_k\}$ be a vertex set for which $H[X]$ is a regular graph. Suppose that for all $u, v \in X$, we have $|d_H(u) - d_H(v)| \le b$. Then there exists a set $U \subseteq V(H) \setminus X$ with $|U| \le c_2 b$ such that all vertices in $X$ have equal degree in $H - U$.
\end{lemma}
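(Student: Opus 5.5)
Since $H[X]$ is regular, deleting vertices of $V(H)\setminus X$ only changes the degrees of the vertices of $X$ through their neighbours outside $X$. So it suffices to equalise the numbers $d_{H-X}$-type counts $|N(x_i)\setminus (X\cup U)|$, $i\in[k]$. The plan is to encode each outside vertex as a vector recording how deleting it shifts the pairwise degree differences, and then apply Lemma~\ref{lem:bounded-representation} to find a small deletion set that cancels those differences exactly.

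For each $w\in W:=V(H)\setminus X$ define
\[
v_w=\bigl(\mathbf{1}_{wx_1}-\mathbf{1}_{wx_2},\ \mathbf{1}_{wx_1}-\mathbf{1}_{wx_3},\ \dots,\ \mathbf{1}_{wx_1}-\mathbf{1}_{wx_k}\bigr)\in\{-1,0,1\}^{k-1}.
\]
Let $z=\sum_{w\in W}v_w$. Its $j$-th coordinate is $d_W(x_1)-d_W(x_{j+1})$. Because $H[X]$ is regular, $d_H(x_i)=r+d_W(x_i)$ for a common $r$, so this coordinate equals $d_H(x_1)-d_H(x_{j+1})$. Hence $\|z\|_\infty\le b$ by hypothesis.

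Lemma~\ref{lem:bounded-representation} with $d=k-1$ gives a subcollection $U\subseteq W$ with $|U|\le c_1(k-1)\,b$ and $\sum_{w\in U}v_w=z$. In $H-U$ the $j$-th difference becomes
\[
d_{H-U}(x_1)-d_{H-U}(x_{j+1})=z_j-\sum_{w\in U}(v_w)_j=0 .
\]
Thus all $x_i$ have equal degree in $H-U$, and we may take $c_2=c_1(k-1)$. When $k-1\ge1$ the lemma applies directly; the case $b=0$ is already covered by Lemma~\ref{lem:bounded-representation} through the empty subcollection.

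No step here is a real obstacle. The only point needing care is noticing that regularity of $H[X]$ makes the inside-$X$ contributions cancel, so the degree differences come entirely from $W$ and the vector encoding is exact.
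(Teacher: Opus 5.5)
Your proposal is correct and follows the paper's proof essentially verbatim: the same $\{-1,0,1\}^{k-1}$ encoding of the vertices outside $X$ (you take $x_1$ as the reference vertex where the paper uses $x_k$), the same use of the regularity of $H[X]$ to identify the coordinates of $z$ with the degree differences $d_H(x_1)-d_H(x_{j+1})$, and the same application of Lemma~\ref{lem:bounded-representation} with $d=k-1$. This gives $c_2=c_1(k-1)$, exactly as in the paper.
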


\begin{proof}
Let $c_2 = c_1$, where $c_1$ is the constant from Lemma~\ref{lem:bounded-representation} with $d = k-1$. For each $v \in V(H) \setminus X$, we define the vector $w_v$ by
\[
w_v(i) = \mathbf{1}_{vx_i } - \mathbf{1}_{vx_k} \quad \text{for } 1 \le i < k.
\]
Thus $w_v \in \{-1, 0, 1\}^{k-1}$. The $i$-th coordinate of $w_v$ records the difference between the contributions of $v$ to the degrees of $x_i$ and $x_k$.

Let $z = \sum_{v \in V(H) \setminus X} w_v$. For each $1 \le i < k$, the $i$-th coordinate of $z$ equals $d_{V(H)\setminus X}(x_i) - d_{V(H)\setminus X}(x_k)$. Since $H[X]$ is regular, we have $d_X(x_i) = d_X(x_k)$ for all $1 \le i < k$. Hence,
\begin{align*}
z(i) &= d_{V(H)\setminus X}(x_i) - d_{V(H)\setminus X}(x_k)
\\&= \bigl(d_{V(H)\setminus X}(x_i) + d_X(x_i)\bigr) - \bigl(d_{V(H)\setminus X}(x_k) + d_X(x_k)\bigr)
= d_H(x_i) - d_H(x_k).
\end{align*}
Since $|d_H(u) - d_H(v)| \le b$ for all $u, v \in X$, it follows that $\|z\|_\infty \le b$.

By Lemma~\ref{lem:bounded-representation}, there exists a set $U \subseteq V(H) \setminus X$ such that $|U| \le c_2b$ and $\sum_{v \in U} w_v = z$. Thus, for
$1\le i<k$,
\begin{align*}
 d_{V(H)\setminus U}(x_i)-d_{V(H)\setminus U}(x_k)
 &=d_H(x_i)-d_H(x_k)
   -\bigl(d_U(x_i)-d_U(x_k)\bigr)\\
 &=d_H(x_i)-d_H(x_k)-\sum_{v\in U}w_v(i)\\
 &=z(i)-z(i)=0.
\end{align*}
The last equality holds because the $i$th coordinate of $z$ is
$d_H(x_i)-d_H(x_k)$ and $\sum_{v\in U}w_v=z$.
The set $U$ is disjoint from $X$, so all vertices of $X$ remain, and
the last display shows that they all have the same degree in $H-U$.
\end{proof}

Let $G$ be a graph on $n$ vertices, and fix an integer $t$ with $0 < t < n$. Among all $(n-t)$-vertex subsets of $V(G)$, choose $S$ such that $G[S]$ has the minimum number of edges, and put $D = V(G) \setminus S$. We use $S$ and $D$ with these meanings whenever this minimum-edge construction is in force.
For $v \in D$ and $u \in S$, replacing $u$ by $v$ cannot reduce the number of edges in the chosen set. Hence
\begin{equation}\label{eq:one-exchange}
e(S \setminus \{u\} \cup \{v\}) - e(S)
= d_S(v) - d_S(u) - \mathbf{1}_{uv} \ge 0.
\end{equation}
More generally, let $T \subseteq D$ and $B \subseteq S$ with $|T| = |B|$. Comparing $S$ with $(S \setminus B) \cup T$ yields
\[
0 \le e((S \setminus B) \cup T) - e(S)
= -\sum_{u \in B} d_S(u) + e(B)
  + \sum_{v \in T} d_S(v) - e(T,B) + e(T),
\]
or equivalently,
\begin{equation}\label{eq:many-exchange-rearranged}
e(T,B) \le
\sum_{v \in T} d_S(v) - \sum_{u \in B} d_S(u) + e(T) + e(B).
\end{equation}
These identities follow by an exact count of edges lost and gained.

We use the following standard consequence of the finite hypergraph
Ramsey theorem; see~\cite{GrahamRothschildSpencer1990}.

\begin{lemma}[Graham, Rothschild and Spencer \cite{GrahamRothschildSpencer1990}]
\label{lem:simultaneous-ramsey}
Let integers $k\ge 2$ and $q\ge 1$ be given. There exists an integer $\rho_k(q)$ such that the following holds. Suppose on a common vertex set, we are given an $r$-uniform hypergraph for each uniformity $r=2,3,\dots,k$. Then every vertex subset of size at least $\rho_k(q)\ge q$ contains a $q$-element subset on which each of these hypergraphs is either complete or empty.
\end{lemma}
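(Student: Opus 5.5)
The plan is to deduce the statement from the ordinary finite Ramsey theorem for $r$-uniform hypergraphs, applied once per uniformity. The one thing that has to be arranged is that the colourings cooperate. A clique found for one hypergraph must be passed on inside the next, and the order of the nested steps determines how the sizes have to be chosen. Write $R_r(m)$ for the $2$-colour $r$-uniform Ramsey number. It is the least $N$ such that every $2$-colouring of the $r$-subsets of an $N$-set has a monochromatic $m$-subset. Here the colour of an $r$-set is $1$ if it is an edge of the $r$-uniform hypergraph and $0$ otherwise, and a monochromatic set is exactly one on which that hypergraph is complete or empty. Its existence is the finite hypergraph Ramsey theorem of \cite{GrahamRothschildSpencer1990}.

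I will define the sizes backwards. Put $m_k=q$, and for $r=k-1,k-2,\dots,2$ put $m_{r}=R_{r+1}(m_{r+1})$. Finally set $\rho_k(q)=\max\{q,R_2(m_2)\}$. Since $R_r(m)\ge m$, we get $\rho_k(q)\ge m_2\ge\cdots\ge m_k=q$, which gives the required inequality $\rho_k(q)\ge q$. (For $k=2$ the definition reduces to $\rho_2(q)=\max\{q,R_2(q)\}$.)

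Now let $W$ be a vertex subset with $|W|\ge\rho_k(q)$. Restrict the $2$-uniform hypergraph to $W$; Ramsey's theorem gives $W_2\subseteq W$ with $|W_2|=m_2$ on which it is complete or empty. Next restrict the $3$-uniform hypergraph to $W_2$. Since $m_2=R_3(m_3)$, we obtain $W_3\subseteq W_2$ with $|W_3|=m_3$ on which that hypergraph is complete or empty. Continue in this way through $r=k$, ending with $W_k\subseteq\cdots\subseteq W_2$ and $|W_k|=q$. The key point is that being complete or empty is inherited by subsets: every $r$-subset of $W_k$ is an $r$-subset of $W_r$. So on $W_k$ each of the $k-1$ hypergraphs is complete or empty, as required.

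There is essentially no obstacle here. The only step needing care is making the sizes nest correctly, which the backward definition of $m_r$ ensures, together with checking the degenerate cases. When $q<r$ there are no $r$-subsets, so the condition holds vacuously. The Ramsey numbers remain finite whenever $m\ge1$, so no further case analysis is needed.
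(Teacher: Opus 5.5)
Your proof is correct. It is the standard nested application of the finite $r$-uniform Ramsey theorem: the sizes $m_r$ are chosen backwards so that each homogeneous set is large enough for the next uniformity, and being complete or empty passes to subsets. The paper gives no proof of its own and simply cites the lemma as a standard consequence of the finite hypergraph Ramsey theorem, so your argument spells out exactly that deduction.
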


\begin{lemma}\label{lem:exact-independent}
Let \(G\) be a graph on \(n\) vertices and let \(t\) be an integer with
\(0<t<n\). Choose an \((n-t)\)-vertex subset \(S\subseteq V(G)\) that
minimizes \(e(G[S])\), and set \(D=V(G)\setminus S\). For every integer
\(k\ge2\) there exists a constant \(c_3=c_3(k)\) such that the following
holds: if for some integer \(x\) the set
\(\{v\in D : d_S(v)=x\}\)
contains an independent set $D'$ of order at least \(c_3\), then
\(s_k(G)\le |D|+2k(k-1)\).
\end{lemma}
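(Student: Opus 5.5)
The plan is to keep $S$, keep exactly $k$ vertices of $D'$, and delete everything else in $D$ together with at most $2k(k-1)$ further vertices of $S$. The $k$ kept vertices should then realise the maximum degree.

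\textbf{Setup and the easy vertices.} Fix $X=\{x_1,\dots,x_k\}\subseteq D'$, to be chosen below, and let $H=G[S\cup X]$. Since $D'$ is independent and every vertex of $D'$ has $d_S=x$, each $x_i$ has $d_H(x_i)=x$. So it suffices to find $U\subseteq S$ with $|U|\le 2k(k-1)$ such that every vertex of $S\setminus U$ has degree at most $x$ in $H-U$. The vertices of $X$ then still have degree $x-d_U(x_i)$, and these are equal provided we pick $U$ so that every $x_i$ loses the same number of neighbours; alternatively we restore equality with Lemma~\ref{lem:degree-balancing}, which costs $O(k^2)$ since $b\le |U|$. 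Then $Z=(D\setminus X)\cup U$ is successful.

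For $u\in S$, the exchange inequality \eqref{eq:one-exchange} with any $v\in X$ gives $d_S(u)\le x-\mathbf{1}_{uv}$. Hence $d_H(u)=d_S(u)+d_X(u)\le x$ whenever $d_X(u)\le 1$. The only dangerous vertices are therefore \emph{bad} vertices $u\in S$ with $m=d_X(u)\ge2$ and $d_S(u)\ge x-m+1$.

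\textbf{Controlling bad vertices.} I will use \eqref{eq:many-exchange-rearranged} with $T\subseteq X$, $|T|=m$, and $B$ a set of $m$ bad vertices each adjacent to all of $T$. Since $e(T)=0$, this gives
\[
m^2\le mx-m(x-m+1)+e(B),
\]
so $e(B)\ge m$. Configurations with many bad vertices sharing the same trace on $X$ should therefore force dense structure in $B$. Iterating the exchange over several disjoint $T$'s should produce a contradiction once too many bad vertices are attached to pairs of $X$.

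To make this uniform, I apply Lemma~\ref{lem:simultaneous-ramsey} on the vertex set $D'$. For each $r\in\{2,\dots,k\}$, put an $r$-set $T\subseteq D'$ into the $r$-uniform hypergraph if the number of bad-type vertices of $S$ whose neighbourhood in $D'$ contains $T$ exceeds a threshold $\theta$ (about $k$). Taking $c_3=\rho_k(q)$ for a suitable $q=q(k)$ gives a $q$-set $Q\subseteq D'$ on which each hypergraph is complete or empty.

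If some level is complete, then many disjoint $r$-subsets of $Q$ each carry more than $\theta$ common high-degree neighbours. Feeding a union of such $T$'s and their witnesses into \eqref{eq:many-exchange-rearranged} should make the left side exceed the right side, because $e(B)$ grows at most quadratically in the number of witnesses while $e(T,B)$ grows faster. This contradicts the minimality of $e(G[S])$. Hence every level is empty, and we take $X\subseteq Q$ of size $k$. For each of the $\binom{k}{2}$ pairs in $X$ at most $\theta$ bad vertices are attached. Deleting them, with a symmetric choice so that the $x_i$ lose equally, gives $|U|\le 2k(k-1)$.

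\textbf{Main obstacle.} The hard step is the exchange computation in the complete case: showing that large uniform families of bad vertices over $Q$ violate \eqref{eq:many-exchange-rearranged}. I would first try $T$ equal to all of $Q$ and $B$ a set of witnesses of size $|Q|$, and count $e(T,B)$ against $e(B)\le\binom{|B|}{2}$. If that fails I would refine the colouring to also record adjacency among the witnesses.
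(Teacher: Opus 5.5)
\textbf{There is a genuine gap: the proposal cannot be completed as stated, for the two reasons below.}

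\textbf{First gap: the reduction ignores that the target degree moves.} Once $U$ contains neighbours of $X$, the vertices of $X$ have degree $p=x-d_U(x_i)<x$ in $H-U$. So you need $d_{H-U}(w)\le p$ for every $w\in S\setminus U$, not $d_{H-U}(w)\le x$.

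Every bad vertex has at least two neighbours in $X$, so deleting bad vertices always lowers the target. Equalising via Lemma~\ref{lem:degree-balancing} deletes further neighbours of $X$ and lowers it again. Vertices you declared harmless can then exceed $p$: for example, $w$ with $d_S(w)=x$, or $w$ with $d_S(w)=x-1$ and one neighbour in $X$. Deleting those can lower $p$ once more.

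The proposal has no mechanism to control this cascade, and it is the core of the paper's proof. There, deficiencies $b_F(v)=p-d_{S\setminus F}(v)$ are measured against the moving target $p=x-|F^+|$, and unstable states are repaired by operation (S). The cascade is bounded because $\Delta(G[S\setminus F])$ drops strictly at each stabilization but stays in $\{x-2k+4,\dots,x\}$. In addition, each stabilization set has fewer than $k$ vertices, since otherwise $D\cup F$ would already be successful.

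That last fact, like every structural fact in the paper's proof, comes from the negated conclusion $s_k(G)>|D|+2k(k-1)$. Your plan never uses it and tries to extract all structure from the minimality of $e(G[S])$ alone.

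\textbf{Second gap: ``a complete level contradicts minimality, so every level is empty'' is false.} Your own computation gives only $e(B)\ge m$, which is consistent for $m\ge 3$. The heuristic that $e(T,B)$ outgrows $e(B)$ is backwards. A complete level only gives each $r$-set its own witnesses, so for disjoint $T_j$ the count $e(T,B)$ is linear in the number of blocks, while $e(B)$ can be quadratic.

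Here is a concrete configuration. Take a clique of $2k-3$ vertices of $S$ with $d_S=x-k+1$, all complete to $D'$. It satisfies \eqref{eq:many-exchange-rearranged} for every $T\subseteq D'$ and $B$ inside the clique with $|T|=|B|$, with equality when $B$ is the whole clique. Yet every $k$-subset of $D'$ has these $2k-3$ common neighbours, all bad for it, so the top level is complete. Hence you cannot conclude emptiness unless $\theta\ge 2k-3$. But then your bound $\binom{k}{2}\theta$ exceeds $2k(k-1)$ once $k\ge 4$.

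\textbf{What the paper does instead.} The exchange inequality is violated only by a genuinely complete bipartite configuration: $2k-2$ vertices of $S$ complete to one common $(2k-2)$-subset of $D'$. This must be combined with the sharp bound $x-d_S(u_i)\le (k-1)+(i-1)-d_{\{u_1,\dots,u_{i-1}\}}(u_i)$, which absorbs $e(B)$.

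The paper builds this configuration one vertex at a time. On a Ramsey-homogeneous subset it takes the least complete level $\ell$ and an $\ell$-set $A$. Then one of two things happens.
\begin{itemize}
\item At least $k-\ell$ vertices of deficiency $\ell$ contain $A$ in their neighbourhood. Then $A$ together with these vertices of $S$ are $k$ maximum-degree vertices after deleting $(D\setminus A)\cup F$. This is a successful set of a shape your plan excludes, because you insist the $k$ vertices lie in $D'$.
\item Otherwise, pigeonhole gives a single vertex $u$ of deficiency at most $k-2$ complete to a large part of the pool.
\end{itemize}
Repeating this $2k-2$ times, with a fresh Ramsey application and stabilization where needed in each round, produces the $K_{2k-2,2k-2}$ that finally contradicts \eqref{eq:many-exchange-rearranged}. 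This repetition is why $c_3=m_{2k-2}$ is of tower type.
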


\begin{proof}
Let $\rho_k$ be the constant supplied by Lemma~\ref{lem:simultaneous-ramsey}. 
Set $m_0=2k-2$ and, for $0\le i<2k-2$, set
\begin{equation}
    m_{i+1}=2\rho_k\!\left((k-1)2^{(k-1)(2k-3)}m_i\right).
    \label{eq:exact-thresholds}
\end{equation}
We prove the lemma with $c_3=m_{2k-2}$; clearly the sequence $m_0,m_1,\ldots,m_{2k-2}$ is nondecreasing.

Suppose, for a contradiction, that
\begin{equation}
    s_k(G)>|D|+2k(k-1).
    \label{eq:exact-contradiction}
\end{equation}
We now formalize the construction. A \emph{valid state} is a pair $(P,F)$ with
$\varnothing\ne P\subseteq D'$ and $F\subseteq S$ such that every vertex
of $F$ is either complete or anticomplete to $P$. For a valid state $(P,F)$, let
\[    
F^+=\{u\in F:P\subseteq N(u)\}
\]
and put $p=x-|F^+|$; the vertices of $F^+$ are called \emph{positive}. Note that the value of $p$ depends on the valid state $(P,F)$.

For every $u\in P$,
\begin{equation}
    d_{(S\setminus F)\cup P}(u)
    =d_{S\setminus F}(u)
    =x-|F^+|
    =p.
    \label{eq:candidate-degree}
\end{equation}
Indeed, $P$ is independent, every vertex of $F^+$ is adjacent to $u$, and
every vertex of $F\setminus F^+$ is nonadjacent to $u$. For
$v\in S\setminus F$, define its \emph{deficiency} by
\begin{equation}
    b_F(v)
    =p-d_{S\setminus F}(v)
    =x-|F^+|-d_S(v)+d_F(v).
    \label{eq:deficiency}
\end{equation}
A valid state $(P,F)$ is \emph{stable} if
$b_F(v)\ge0$ for every $v\in S\setminus F$, or equivalently, if every
vertex of $G[S\setminus F]$ has degree at most $p$.
The initial state $(D',\varnothing)$ is stable. Indeed, fix
$z\in D'$. Since $d_S(z)=x$, the one-vertex exchange
\eqref{eq:one-exchange} gives
$
    d_S(v)\le x
$
for every $v\in S$. Hence
$b_{\varnothing}(v)=x-d_S(v)\ge0$.

A finite sequence of valid states is called an \emph{admissible run} if it
starts at $(D',\varnothing)$ and every transition is obtained by one of
the following operations.

\begin{itemize}
    \item[(E)] If $(P,F)$ is stable and $|F^+|<2k-2$, choose
    $u\in S\setminus F$ and a nonempty set
    $P'\subseteq P\cap N(u)$ such that $b_F(u)\le k-2$, and replace
    $(P,F)$ by
    $
        (P',F\cup\{u\}).
    $
    We call $u$ an \emph{extension vertex}. Since $P'\subseteq P$,
    all vertices already in $F$ retain their complete or anticomplete
    status, while $u$ is complete to $P'$. Thus the new state is valid
    and has exactly one additional positive vertex.

    \item[(S)]  If $(P,F)$ is unstable and $|F^+|<2k-2$, let
$
    W=W(P,F)
    =
    \{w\in S\setminus F:
      d_{S\setminus F}(w)=\Delta(G[S\setminus F])\}.
$
For each subset $A\subseteq W$, define
$
    P_A=\{v\in P:N(v)\cap W=A\}.
$
The nonempty sets $P_A$, $A\subseteq W$, form a partition of $P$
into at most $2^{|W|}$ classes. Choose $A_0\subseteq W$ such that
$P_{A_0}$ has maximum size, and put
$
    P'=P_{A_0}.
$
Then
$
    |P'|\ge 2^{-|W|}|P|.
$
Moreover, for every $w\in W$, the vertex $w$ is complete to $P'$
if $w\in A_0$, and anticomplete to $P'$ if $w\notin A_0$.

First replace $(P,F)$ by $(P',F)$. Fix an arbitrary ordering
$w_1,\dots,w_{|W|}$ of $W$. Put $F_0=F$ and
\[
    F_j=F\cup\{w_1,\dots,w_j\},
    \qquad 1\le j\le |W|.
\]
Starting from $(P',F_0)$, pass successively through the states
$(P',F_1),\dots,(P',F_{|W|})$, stopping immediately if the number of
positive vertices reaches $2k-2$. Every state reached in this way is
included in the run.
\end{itemize}

A state is \emph{terminal} if $|F^+|=2k-2$, and no transition is made
from a terminal state. The sets $W$ arising in successive applications
of operation \textnormal{(S)} are called the \emph{stabilization sets}
and are denoted by $W_1,W_2,\dots$ in their order of selection. Every
transition only shrinks $P$ and enlarges $F$; consequently, a vertex
that is complete or anticomplete to the current set $P$ retains that
property in all later states.

\begin{claim}\label{cl:stabilization-bounds}
In every admissible run, at most \(2k-3\) stabilization sets are
selected before termination. Moreover, every stabilization set \(W\)
selected before termination satisfies $|W|\le k-1.$
\end{claim}
\begin{proof}
Let $W_1,W_2,\dots,W_s$ be the stabilization sets selected in an admissible run, in their order
of selection. For each \(i\in[s]\), let \((P_i,F_i)\) be the state immediately before \(W_i\) is selected, and put
$\mu_i=\Delta(G[S\setminus F_i]).$
Since for arbitrary \(i\in[s]\), \(W_i\) is selected at an unstable nonterminal state, there exists
\(w\in S\setminus F_i\) such that \(b_{F_i}(w)<0\). For this \(i\) and \(w\), we have
\(0 > b_{F_i}(w) = x-|F_i^+|-d_{S\setminus F_i}(w)\). By the definition of \(\mu_i\) we obtain
\[
\mu_i = \Delta(G[S\setminus F_i])
\ge d_{S\setminus F_i}(w)
> x-|F_i^+|
\ge x-(2k-3).
\]
On the other hand, \eqref{eq:one-exchange} gives \(d_S(v)\le x\) for
every \(v\in S\), and therefore \(x-2k+4\le\mu_i\le x\).

For \(i<s\), the set \(W_i\) is removed from the current induced subgraph on \(S\) before \(W_{i+1}\) is selected. Since \(W_i\) consists of all maximum-degree vertices of \(G[S\setminus F_i]\), deleting \(W_i\) strictly decreases the maximum degree, and subsequent operations only delete further vertices. Thus \(\mu_{i+1}<\mu_i\). Consequently \(\mu_1>\mu_2>\cdots>\mu_s\), where each \(\mu_i\) belongs to \(\{x-2k+4,x-2k+5,\dots,x\}\). This set has \(2k-3\) elements, so \(s\le2k-3\).

It remains to prove that every stabilization set has size at most \(k-1\). Suppose otherwise, and let \(W_t\) be the first stabilization set with \(|W_t|\ge k\). Then \(|W_j|\le k-1\) for every \(j<t\). Let \((P_t,F_t)\) be the state immediately before \(W_t\) is selected. All vertices of \(W_1,\dots,W_{t-1}\) belong to \(F_t\). Moreover, every extension vertex is positive when added and remains positive thereafter. Since \(W_t\) is selected before termination, at most \(2k-3\) extension vertices have been added. Therefore
\begin{align*}
    |F_t|&\le \sum_{j=1}^{t-1}|W_j|+(2k-3)
\le (t-1)(k-1)+(2k-3)\\
&\le (2k-4)(k-1)+(2k-3)
<2k(k-1).
\end{align*}
Thus \(|D\cup F_t| < |D|+2k(k-1) < s_k(G)\).
However, \(G-(D\cup F_t)=G[S\setminus F_t]\), and every vertex of
\(W_t\) attains the maximum degree in this graph. Since \(|W_t|\ge k\),
the set \(D\cup F_t\) is successful, a contradiction. Hence
\(|W|\le k-1\) for every stabilization set selected before termination.
\end{proof}

\begin{claim}
\label{cl:budget-classes}
For every state $(P,F)$ appearing in an admissible run before termination, $|F|\le k(2k-3)<2k(k-1).$
\end{claim}

\begin{proof}
Every vertex of \(F\) is either an extension vertex or belongs to a
stabilization set. Before termination, at most \(2k-3\) extension
vertices can be added. By Claim~\ref{cl:stabilization-bounds}, at most \(2k-3\)
stabilization sets are selected, each of size at most \(k-1\). Therefore
\(|F| \le (2k-3)(k-1)+(2k-3) = k(2k-3) < 2k(k-1)\).
\end{proof}

\begin{claim}\label{cl:stable-extension}
Let $(P,F)$ be a stable state appearing in an admissible run before
termination, and let $m\ge2k-2$. If $|P|\ge2\rho_k((k-1)m),$ then there exist $u\in S\setminus F$ and
$P_1\subseteq P\cap N(u)$ such that
$|P_1|\ge m$ and $b_F(u)\le k-2.$
Consequently, operation \textnormal{(E)} can be applied, and replacing
$(P,F)$ by $(P_1,F\cup\{u\})$ yields a valid state with exactly one
additional positive vertex. The new state need not be stable.
\end{claim}

\begin{proof}
For each $j\in\{0,1,\dots,k-1\}$, define a $(j+1)$-uniform hypergraph
$\mathcal H_j$ on $P$ by declaring a $(j+1)$-set to be an edge if it
is contained in $N(w)$ for some $w\in S\setminus F$ satisfying
$b_F(w)=j$.

We first show that every $k$-subset of $P$ contains an edge of at least
one of the hypergraphs $\mathcal H_0,\mathcal H_1,\dots,\mathcal H_{k-1}.$
Suppose that $Y\subseteq P$ is a $k$-set containing no such edge, and
put $Z_Y=(D\setminus Y)\cup F.$
Since $Y\subseteq D$ and $F\subseteq S$, the two parts of $Z_Y$ are
disjoint. By Claim~\ref{cl:budget-classes},
\begin{align*}
    |Z_Y|
    =|D|-|Y|+|F|=|D|-k+|F|<|D|+2k(k-1)<s_k(G),
\end{align*}
where the last inequality follows from
\eqref{eq:exact-contradiction}.
The remaining graph is
$
    G-Z_Y=G[(S\setminus F)\cup Y].
$
Since $D'$ is independent, every vertex of $Y$ has degree $p$ in this
graph by \eqref{eq:candidate-degree}. Let $w\in S\setminus F$. Stability
gives $b_F(w)\ge0$. If $b_F(w)\ge k$, then
$d_Y(w)\le|Y|= k\le b_F(w).$
If $0\le b_F(w)<k$ and $d_Y(w)>b_F(w)$, then
$Y\cap N(w)$ contains a $(b_F(w)+1)$-subset, which is an edge of
$\mathcal H_{b_F(w)}$, contrary to the choice of $Y$. Hence
$d_Y(w)\le b_F(w)$
in all cases. By \eqref{eq:deficiency},
\[
    d_{(S\setminus F)\cup Y}(w)
    =p-b_F(w)+d_Y(w)
    \le p.
\]
Thus all vertices of the remaining graph have degree at most $p$, while
the $k$ vertices of $Y$ have degree exactly $p$. Hence $Z_Y$ is
successful, contradicting $|Z_Y|<s_k(G)$.

We next extract a large homogeneous set. A $1$-uniform hypergraph on $P'$ is called complete if every singleton is an edge and empty if no singleton is an edge. Let $X_0$ be the set of vertices $v\in P$ such that $\{v\}$ is an edge of $\mathcal H_0$.
One of $X_0$ and $P\setminus X_0$ has size at least
$|P|/2\ge\rho_k((k-1)m)$; denote it by $P'$.
Restricting all hypergraphs to $P'$, the subhypergraph $\mathcal H_0[P']$ is either complete or empty.
Applying
Lemma~\ref{lem:simultaneous-ramsey} to
$\mathcal H_1,\dots,\mathcal H_{k-1}$, we obtain a set
$P_0\subseteq P'$ with
$
    |P_0|=(k-1)m
$
such that every $\mathcal H_j[P_0]$, $0\le j\le k-1$, is either complete
or empty. Since $|P_0|\ge k$ and every $k$-subset of $P_0$ contains an
edge of at least one $\mathcal H_j$, at least one of these hypergraphs
is complete on $P_0$. Let $\ell$ be the smallest index such that
$\mathcal H_\ell[P_0]$ is complete. Then
$\mathcal H_j[P_0]$ is empty for every $j<\ell$.

Fix an $\ell$-subset $A\subseteq P_0$, with $A=\varnothing$ when
$\ell=0$, and define
\[
    \mathcal W(A)
    =
    \{w\in S\setminus F:
      b_F(w)=\ell,\ A\subseteq N(w)\}.
\]
Suppose that
$
    |\mathcal W(A)|\ge k-\ell,
$
and put
$
    Z_A=(D\setminus A)\cup F.
$
Again, the two parts of $Z_A$ are disjoint, and
Claim~\ref{cl:budget-classes} gives
\begin{align*}
    |Z_A|
    =|D|-|A|+|F|=|D|-\ell+|F|<|D|+2k(k-1)<s_k(G).
\end{align*}

The remaining graph is
$
    G-Z_A=G[(S\setminus F)\cup A].
$
Since $D'$ is independent, every vertex of $A$ has degree $p$ in this
graph. Let $v\in S\setminus F$ and put $j=b_F(v)$. If $j<\ell$, then
$\mathcal H_j[P_0]$ is empty, so $v$ has at most $j$ neighbors in
$P_0$. Hence
$
    d_A(v)\le j=b_F(v).
$
If $j\ge\ell$, then
$
    d_A(v)\le|A|=\ell\le j=b_F(v).
$
Thus, in every case,
$
    d_A(v)\le b_F(v),
$
and consequently
\[
    d_{(S\setminus F)\cup A}(v)
    =p-b_F(v)+d_A(v)
    \le p.
\]

For every $w\in\mathcal W(A)$, we have
$b_F(w)=\ell$ and $A\subseteq N(w)$, so
$
    d_A(w)=\ell=b_F(w)
$
and therefore
$
    d_{(S\setminus F)\cup A}(w)=p.
$
It follows that the $\ell$ vertices of $A$, together with any
$k-\ell$ vertices of $\mathcal W(A)$, are $k$ vertices of maximum
degree in the remaining graph. Hence $Z_A$ is successful, contradicting
$|Z_A|<s_k(G)$. We conclude that
\begin{equation}
    |\mathcal W(A)|\le k-\ell-1.
    \label{eq:WA-bound}
\end{equation}

If $\ell=k-1$, then \eqref{eq:WA-bound} gives
$\mathcal W(A)=\varnothing$. Since $|P_0| = (k-1)m$ and $m \ge 2k-2$, it follows that $|P_0| > |A| = \ell = k-1$. Choose any $y\in P_0\setminus A$.
Since $\mathcal H_{k-1}[P_0]$ is complete, the set $A\cup\{y\}$ is
contained in the neighborhood of some $w\in S\setminus F$ with
$b_F(w)=k-1$. Hence $w\in\mathcal W(A)$, a contradiction. Therefore
$\ell\le k-2$.
Moreover, for every $y\in P_0\setminus A$, the completeness of
$\mathcal H_\ell[P_0]$ gives a vertex $w\in\mathcal W(A)$ adjacent to
$y$. Thus $\mathcal W(A)\ne\varnothing$ and
\[
    P_0\setminus A
    \subseteq
    \bigcup_{w\in\mathcal W(A)}
    \bigl(N(w)\cap(P_0\setminus A)\bigr).
\]
Consequently,
\[
    |P_0|-|A|
    \le
    \sum_{w\in\mathcal W(A)}
    |N(w)\cap(P_0\setminus A)|.
\]
Using \eqref{eq:WA-bound}, some $u\in\mathcal W(A)$ satisfies
\[
    |N(u)\cap(P_0\setminus A)|
    \ge
    \frac{(k-1)m-\ell}{k-\ell-1}.
\]
Since $A\subseteq N(u)$,
\begin{align*}
    |N(u)\cap P_0|\ge
    \frac{(k-1)m-\ell}{k-\ell-1}+\ell=m+\frac{\ell(m+k-\ell-2)}{k-\ell-1}\ge m.
\end{align*}
The last inequality holds since $0\le \ell\le k-2$ and $m\ge 2k-2$.
Set
$
    P_1=N(u)\cap P_0.
$
Then
\[
    P_1\subseteq P\cap N(u),
    \qquad
    |P_1|\ge m,
    \qquad
    b_F(u)=\ell\le k-2.
\]
Thus operation \textnormal{(E)} can be applied. Since $P_1$ is nonempty,
every old positive or anticomplete vertex retains its status, while
$u\notin F$ is complete to $P_1$. Hence
$(P_1,F\cup\{u\})$ is a valid state with exactly one additional
positive vertex.
\end{proof}

\begin{claim}\label{cl:stabilization}
Let $(P,F)$ be an unstable state appearing in an admissible run before
termination. The run can be extended, by repeatedly applying operation
\textnormal{(S)}, to a valid state $(P_1,F_1)$ with
$P_1\subseteq P$ and $F\subseteq F_1$ such that either $(P_1,F_1)$ is
terminal or it is stable. If $r$ stabilization sets are selected, then
$1\le r\le 2k-3$ and
\begin{equation}
    |P_1|\ge 2^{-(k-1)r}|P|
    \ge 2^{-(k-1)(2k-3)}|P|.
    \label{eq:stabilization-retention}
\end{equation}
Moreover, every vertex that becomes positive during these applications
had deficiency at most $k-1$ immediately before it was added to $F$.
\end{claim}

\begin{proof}
Starting from \((P,F)\), apply operation \textnormal{(S)} whenever the
current state is unstable and nonterminal. By
Claim~\ref{cl:stabilization-bounds}, at most \(2k-3\) stabilization sets
can be selected before termination. Hence this process stops after
finitely many applications, at a state \((P_1,F_1)\) that is either
terminal or stable.
Let \(W_1,\dots,W_r\) be the stabilization sets selected during the
process. Since the initial state is unstable, \(r\ge1\), while
Claim~\ref{cl:stabilization-bounds} gives \(r\le2k-3\). At each
application of \textnormal{(S)}, the current set \(P\) is partitioned
into at most \(2^{|W_i|}\) classes, and the largest class is retained.
As \(|W_i|\le k-1\), this retains at least a \(2^{-(k-1)}\) fraction of
the current set. Therefore
\begin{align*}
    |P_1| \ge (\prod_{i=1}^{r}2^{-|W_i|})|P|
\ge 2^{-(k-1)r}|P|
\ge 2^{-(k-1)(2k-3)}|P|.
\end{align*}

It remains to prove the deficiency bound. Suppose that a stabilization
set \(W\) is selected at the state \((Q,F_0)\), and write
$
W=\{w_1,\dots,w_s\}
$ in the order in which its vertices are added to \(F_0\). For
\(0\le i\le s\), let
$
F_i=F_0\cup\{w_1,\dots,w_i\}.
$
Let \(w_j\) be a vertex that becomes positive when it is added. Since
\(W\) is selected at an unstable state and every vertex of \(W\) has
degree \(\Delta(G[S\setminus F_0])\) in \(G[S\setminus F_0]\), we have
\[
\begin{aligned}
b_{F_0}(w_j)
=x-|F_0^+|-d_{S\setminus F_0}(w_j)
=x-|F_0^+|-\Delta(G[S\setminus F_0])
\le -1.
\end{aligned}
\]
For each \(1\le i<j\), the definition of deficiency gives
\[
b_{F_i}(w_j)-b_{F_{i-1}}(w_j)
=
-\bigl(|F_i^+|-|F_{i-1}^+|\bigr)
+\mathbf 1_{w_iw_j}.
\]
If \(w_i\) is positive, this difference is
$
-1+\mathbf 1_{w_iw_j}\le0;
$
if \(w_i\) is anticomplete to the retained class, it is
$
\mathbf 1_{w_iw_j}\le1.
$
Hence, in either case,
$
b_{F_i}(w_j)-b_{F_{i-1}}(w_j)\le1.
$
Therefore, immediately before \(w_j\) is added,
\[
\begin{aligned}
b_{F_{j-1}}(w_j)
\le b_{F_0}(w_j)+(j-1)
\le -1+(j-1)
\le -1+(|W|-1)
\le k-3
\le k-1,
\end{aligned}
\]
where the penultimate inequality follows from \(|W|\le k-1\).
Thus every vertex that becomes positive during the stabilization
process has deficiency at most (k-1) immediately before it is added.
\end{proof}

\begin{claim}\label{cl:terminal-run}
There exists an admissible run starting from $(D',\varnothing)$ and ending at a terminal state $(P^*,F^*)$ such that $|(F^*)^+|=2k-2$ and $|P^*|\ge m_0=2k-2.$ Moreover, every vertex of $(F^*)^+$ had deficiency at most $k-1$ immediately before it was added to $F$.
\end{claim}

\begin{proof}
We construct the run in rounds. Each round begins at a stable nonterminal state \((P,F)\), and we maintain the invariant
\begin{equation}    
|P|\ge m_{2k-2-|F^+|}.    
\label{eq:pool-invariant}
\end{equation}
The initial state \((D',\varnothing)\) is stable and satisfies \eqref{eq:pool-invariant}, since \(|D'|\ge m_{2k-2}\).
Suppose that a round begins at $(P,F)$, and set $\ell =2k-2-|F^+|.$ Since the state is nonterminal, $1\le \ell \le 2k-2$. By
\eqref{eq:pool-invariant} and \eqref{eq:exact-thresholds},
\[
    |P|\ge m_\ell
    =2\rho_k\!\left(
        (k-1)2^{(k-1)(2k-3)}m_{\ell-1}
      \right).
\]
Moreover, $2^{(k-1)(2k-3)}m_{\ell-1}\ge m_{\ell-1}\ge m_0=2k-2.$ Hence Claim~\ref{cl:stable-extension}, applied with $m=2^{(k-1)(2k-3)}m_{\ell-1},$ yields an application of operation \textnormal{(E)} and a valid state $(Q,F\cup\{u\})$ such that $|Q|\ge 2^{(k-1)(2k-3)}m_{\ell-1}.$ The vertex $u$ becomes positive and had deficiency at most $k-2$ immediately before it was added.

If $(Q,F\cup\{u\})$ is stable or terminal, set $(P',F')=(Q,F\cup\{u\}).$
Otherwise, apply Claim~\ref{cl:stabilization}, and let $(P',F')$ be
the resulting stable or terminal state. In either case,
\[
    |F'^+|\ge |F^+|+1
    \qquad\text{and}\qquad
    |P'|\ge m_{\ell-1}.
\]
Indeed, the second inequality is immediate if no stabilization is
needed. Otherwise, Claim~\ref{cl:stabilization} gives
\[
    |P'|
    \ge 2^{-(k-1)(2k-3)}|Q|
    \ge m_{\ell-1}.
\]
The same claim ensures that every vertex which becomes positive during
stabilization had deficiency at most $k-1$ immediately before it was
added.

If $(P',F')$ is terminal, the run stops. Otherwise, $(P',F')$ is
stable and nonterminal, and
\[
\begin{aligned}
    2k-2-|F'^+|
    &\le 2k-2-(|F^+|+1)=\ell-1.
\end{aligned}
\]
Since $(m_j)$ is nondecreasing,
\[
    |P'|
    \ge m_{\ell-1}
    \ge m_{2k-2-|F'^+|}.
\]
Thus \eqref{eq:pool-invariant} holds at the beginning of the next
round.

Every nonterminal round increases the number of positive vertices by
at least one. Hence, after at most $2k-2$ rounds, the run reaches a
terminal state $(P^*,F^*)$. By definition, $|(F^*)^+|=2k-2.$
Applying the estimate above to the final round, and noting that its
corresponding index satisfies $i\ge1$, we obtain
\[
    |P^*|\ge m_{i-1}\ge m_0=2k-2.
\]
Finally, every vertex of $(F^*)^+$ was added either by operation
\textnormal{(E)}, with deficiency at most $k-2$, or during an
application of operation \textnormal{(S)}, with deficiency at most
$k-1$. This proves the claim.
\end{proof}

Let $(P^*,F^*)$ be the terminal state given by
Claim~\ref{cl:terminal-run}. List the vertices of $(F^*)^+$ in the
order in which they were added as $u_1,\dots,u_{2k-2}$. For each
$i\in[2k-2]$, let $(P_i,F_i)$ be the state immediately before $u_i$
was added. Then $F_i^+=\{u_1,\dots,u_{i-1}\}.$

By \eqref{eq:deficiency} and the preceding deficiency bound,
\[
\begin{aligned}
    b_{F_i}(u_i)
    &=x-d_S(u_i)+d_{F_i\setminus F_i^+}(u_i)
      +d_{F_i^+}(u_i)-(i-1)\le k-1.
\end{aligned}
\]
Since $d_{F_i\setminus F_i^+}(u_i)\ge0$, we obtain
\[
    x-d_S(u_i)
    \le k-1+(i-1)-d_{F_i^+}(u_i).
\]
Moreover,
$
    \sum_{i=1}^{2k-2}d_{F_i^+}(u_i)=e((F^*)^+),
$
because every edge of $G[(F^*)^+]$ is counted exactly once, namely
when its later-added endpoint is considered. Summing the preceding
inequality over $i$ gives
\begin{equation}
\sum_{i=1}^{2k-2}\bigl(x-d_S(u_i)\bigr)
\le (2k-2)(k-1)+\binom{2k-2}{2}-e((F^*)^+).
\label{eq:positive-upper}
\end{equation}

Since $|P^*|\ge2k-2$, choose a set $T\subseteq P^*$ with
$|T|=2k-2$. As $P^*\subseteq D'$, the set $T$ is independent and
$d_S(v)=x$ for every $v\in T$. Moreover, every vertex of $(F^*)^+$
is complete to $P^*$, and hence
\[
    e(T,(F^*)^+)=(2k-2)^2.
\]
Applying \eqref{eq:many-exchange-rearranged} with
$B=(F^*)^+$, and using $e(T)=0$, yields
\begin{align*}
(2k-2)^2
&=e(T,(F^*)^+)\le \sum_{v\in T}d_S(v)
   -\sum_{i=1}^{2k-2}d_S(u_i)
   +e(T)+e((F^*)^+)\\
&=\sum_{i=1}^{2k-2}\bigl(x-d_S(u_i)\bigr)
  +e((F^*)^+).
\end{align*}
Together with \eqref{eq:positive-upper}, this gives
\[
    (2k-2)^2
    \le (2k-2)(k-1)+\binom{2k-2}{2}
    =(2k-2)^2-(k-1),
\]
which is impossible as $k\ge2$. Hence
\eqref{eq:exact-contradiction} is false, and therefore
$s_k(G)\le |D|+2k(k-1).$
\end{proof}

\begin{corollary}
\label{cor:bounded-multiplicity}
Let \(k\ge 2\) be fixed, and let \(c_3 = c_3(k)\) be the constant from Lemma~\ref{lem:exact-independent}. 
Suppose \(G\) is a graph with \(s_k(G) > |D| + 2k(k-1)\), where \(S\) and \(D\) arise from the minimum-edge construction for some choice of \(t\) with \(0<t<|V(G)|\).
Then for every integer \(x\), 
\(\bigl|\{v\in D : d_S(v) = x\}\bigr| < R(k, c_3)\).
\end{corollary}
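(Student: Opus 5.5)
The plan is to argue by contradiction, combining Ramsey's theorem with Lemma~\ref{lem:exact-independent}. Fix an integer \(x\) and let \(D_x=\{v\in D: d_S(v)=x\}\). Suppose that \(|D_x|\ge R(k,c_3)\). By the definition of \(R(k,c_3)\), the graph \(G[D_x]\) contains either a clique of order \(k\) or an independent set of order \(c_3\).

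\emph{Independent case.} If \(G[D_x]\) contains an independent set \(D'\) of order \(c_3\), then \(D'\) satisfies exactly the hypothesis of Lemma~\ref{lem:exact-independent} for this \(x\), with the same \(t\), \(S\) and \(D\). The lemma gives \(s_k(G)\le |D|+2k(k-1)\), contradicting the assumption of the corollary.

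\emph{Clique case.} Suppose \(G[D_x]\) contains a clique \(K=\{y_1,\dots,y_k\}\). I would delete \(Z=D\setminus K\) and examine \(G-Z=G[S\cup K]\).
\begin{itemize}
\item Each \(y\in K\) has degree \(d_S(y)+(k-1)=x+k-1\) in \(G-Z\).
\item For \(u\in S\), the one-exchange inequality \eqref{eq:one-exchange} with any \(v=y_i\) gives \(d_S(u)\le x-\mathbf{1}_{uy_i}\) for each \(i\). Hence \(d_S(u)\le x\), and \(d_S(u)\le x-1\) if \(u\) has a neighbour in \(K\).
\item So if \(u\) has no neighbour in \(K\), its degree in \(G-Z\) is at most \(x\). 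If \(u\) has \(j\ge1\) neighbours in \(K\), its degree is at most \(x-1+j\le x+k-1\).
\end{itemize}
Thus the \(k\) vertices of \(K\) attain the maximum degree \(x+k-1\) of \(G-Z\), and \(Z\) is successful. Since \(|Z|=|D|-k<|D|+2k(k-1)\), this again contradicts the hypothesis.

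In both cases we reach a contradiction, so \(|D_x|<R(k,c_3)\) for every \(x\). No step looks like a genuine obstacle. The only thing to get right is the bound on \(u\in S\) in the clique case: one must use the \(-\mathbf{1}_{uv}\) term in \eqref{eq:one-exchange} to gain the extra \(-1\) whenever \(u\) is adjacent to \(K\), because the crude bound \(d_S(u)\le x\) alone would allow degree up to \(x+k\).
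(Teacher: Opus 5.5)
Your proposal is correct and follows essentially the same route as the paper. The paper also applies Ramsey's theorem to $\{v\in D: d_S(v)=x\}$, disposes of the independent case via Lemma~\ref{lem:exact-independent}, and handles the clique case by deleting $D\setminus K$, using the $-\mathbf{1}_{uv}$ term of \eqref{eq:one-exchange} to cap the degrees of vertices of $S$ at $x+k-1$.
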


\begin{proof}
Assume, for contradiction, that for some $x$ the set $X = \{v\in D : d_S(v) = x\}$ satisfies $|X| \ge R(k, c_3)$.
By definition of the Ramsey number, there exists $X_1 \subseteq X$ such that $G[X_1]$ is either an independent set of order $c_3$ or a clique of order $k$.

If $G[X_1]$ is independent, Lemma~\ref{lem:exact-independent} gives $s_k(G)\le |D|+2k(k-1)$, contradicting $s_k(G) > |D|+2k(k-1)$.
If $G[X_1]$ is a clique of order $k$, consider the graph $G[S\cup X_1]$.
Every $u\in X_1$ satisfies $d_{S\cup X_1}(u) = d_S(u) + d_{X_1}(u) = x + k-1$.
For any $v\in S$, we have two possibilities:
\begin{itemize}
    \item If $v$ has a neighbour in $X_1$, then by \eqref{eq:one-exchange}, $d_S(v) \le x-1$. Since $|X_1|=k$, we get $d_{X_1}(v)\le k$, and hence
    $d_{S\cup X_1}(v) \le (x-1) + k = x+k-1$.
    \item If $v$ is adjacent to no vertex of $X_1$, then \eqref{eq:one-exchange} yields $d_S(v) \le x$, so $d_{S\cup X_1}(v) = d_S(v) \le x \le x+k-1$.
\end{itemize}
Thus every vertex of $S\cup X_1$ has degree at most $x+k-1$, while all vertices of $X_1$ attain this value.
Consequently, the maximum degree of $G[S\cup X_1]$ is $x+k-1$, achieved by the $k$ vertices of $X_1$.
Hence, $D\setminus X_1$ is a successful deletion set, contradicting $s_k(G) > |D| + 2k(k-1)$.
\end{proof}

\begin{lemma}
\label{lem:height-width}
Fix an integer \(k\ge2\). Let \(G\) be a graph on \(n\) vertices and let \(t\) be an integer with \(0<t<n\). 
Choose an \((n-t)\)-vertex subset \(S\subseteq V(G)\) that minimizes \(e(G[S])\), and set \(D=V(G)\setminus S\).
Let \(c_2 = c_2(k)\) be the constant from Lemma~\ref{lem:degree-balancing}, 
let \(a,b\) be integers with \(b\ge 0\), and suppose \(W\subseteq D\) satisfies 
\(|W|\ge R(k,k)\) and \(a\le d_S(v)\le a+b\) for all \(v\in W\). 
If \(a-\min_{v\in D}d_S(v)\ge c_2 b+k\), then \(s_k(G)\le |D|-k+c_2 b\).
\end{lemma}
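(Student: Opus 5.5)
The plan is to find $k$ vertices of $W$ that induce a regular graph, equalize their degrees with Lemma~\ref{lem:degree-balancing} by deleting few vertices of $S$, and then use the minimality of $e(G[S])$ to show that no vertex of $S$ can beat them. Since $|W|\ge R(k,k)$, Ramsey's theorem gives a $k$-set $X\subseteq W$ with $G[X]$ a clique or an independent set. In either case $H[X]$ is regular, where we set $H=G[S\cup X]$.

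For $x\in X$ we have $d_H(x)=d_S(x)+d_X(x)$. The term $d_X(x)$ is the same for all $x\in X$ (it is $k-1$ or $0$), and $d_S(x)\in[a,a+b]$. Hence $|d_H(u)-d_H(v)|\le b$ for all $u,v\in X$. Lemma~\ref{lem:degree-balancing} then gives $U\subseteq S$ with $|U|\le c_2b$ such that all vertices of $X$ have a common degree $p$ in $H-U$.

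Take the deletion set $Z=(D\setminus X)\cup U$. Its size is at most $|D|-k+c_2b$, and $G-Z=H-U$. It remains to show that $p=\Delta(H-U)$, so that $Z$ is successful.

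\emph{Lower bound on $p$.} For $x\in X$,
\[
p=d_{S\setminus U}(x)+d_X(x)\ge d_S(x)-|U|+d_X(x)\ge a-c_2b+d_X(x).
\]

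\emph{Upper bound for vertices of $S\setminus U$.} Let $z\in D$ attain $\mu:=\min_{v\in D}d_S(v)$. The one-vertex exchange inequality~\eqref{eq:one-exchange} gives $d_S(v)\le \mu-\mathbf 1_{vz}\le\mu$ for every $v\in S$. Therefore every $v\in S\setminus U$ satisfies
\[
d_{H-U}(v)\le d_S(v)+d_X(v)\le \mu+d_X(v).
\]

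\emph{Comparison.} The two cases for $X$ are handled separately, using the hypothesis $a-\mu\ge c_2b+k$.
\begin{itemize}
\item If $X$ is independent, then $p\ge a-c_2b\ge\mu+k\ge\mu+d_X(v)$.
\item If $X$ is a clique, then $p\ge a-c_2b+k-1\ge \mu+2k-1>\mu+k\ge \mu+d_X(v)$.
\end{itemize}
In both cases every vertex of $S\setminus U$ has degree at most $p$ in $H-U$, while the $k$ vertices of $X$ have degree exactly $p$. So $Z$ is successful, and $s_k(G)\le |Z|\le |D|-k+c_2b$.

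The main point needing care is the bound $d_S(v)\le\mu$ for all $v\in S$, which comes from \eqref{eq:one-exchange}. The hypothesis $a-\mu\ge c_2b+k$ is sized exactly to absorb two losses: the up to $c_2b$ neighbours removed with $U$, and the up to $k$ extra degree a vertex of $S$ can gain from $X$. Note that ties between $p$ and the degrees of vertices in $S\setminus U$ are harmless, since we only need $X$ to attain the maximum degree.
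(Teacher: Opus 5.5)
Your proof is correct and follows the paper's argument essentially step for step: Ramsey gives a regular $k$-set $X\subseteq W$, Lemma~\ref{lem:degree-balancing} applied to $G[S\cup X]$ gives $U$, the exchange inequality \eqref{eq:one-exchange} gives $d_S(u)\le\min_{v\in D}d_S(v)$ for $u\in S$, and the deletion set is $(D\setminus X)\cup U$. The only difference is cosmetic: your clique/independent case split is unnecessary, since the paper simply uses $p\ge a-|U|$ (dropping $d_X(x)\ge 0$) together with $d_X(u)\le k$.
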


\begin{proof}
Since $|W|\ge R(k,k)$, Ramsey's theorem yields a \(k\)-set \(X\subseteq W\) such that \(G[X]\) is either a clique or an independent set. In either case, \(G[X]\) is regular.

For any \(u,v\in X\), we have \(|d_S(u)-d_S(v)|\le b\) because \(X\subseteq W\), and \(d_X(u)=d_X(v)\) by the regularity of \(G[X]\). Thus \(|d_{S\cup X}(u)-d_{S\cup X}(v)|\le b\). By Lemma~\ref{lem:degree-balancing}, there exists \(U\subseteq S\) with \(|U|\le c_2b\) such that all vertices of \(X\) have the same degree, say \(p\), in \(G[(S\setminus U)\cup X]\). Since \(d_S(u)\ge a\) for every \(u\in X\), we have \(p\ge a-|U|\ge a-c_2b\).

Choose \(v_0\in D\) with \(d_S(v_0)=\min_{v\in D}d_S(v)\). By~\eqref{eq:one-exchange}, \(d_S(u)\le d_S(v_0)\) for every \(u\in S\). Hence, for each \(u\in S\setminus U\),
\[
d_{(S\setminus U)\cup X}(u)
=d_{S\setminus U}(u)+d_X(u)
\le d_S(u)+k
\le d_S(v_0)+k.
\]
The assumption \(a-d_S(v_0)\ge c_2b+k\) gives \(p\ge a-c_2b\ge d_S(v_0)+k\). Therefore, every vertex of \(X\) has degree \(p\), while every vertex of \(S\setminus U\) has degree at most \(p\), in \(G[(S\setminus U)\cup X]\). Thus its maximum degree is attained by all \(k\) vertices of \(X\).

Finally, \(G[(S\setminus U)\cup X]\) is obtained by deleting \((D\setminus X)\cup U\), and
\[
\bigl|(D\setminus X)\cup U\bigr|
=|D|-k+|U|
\le |D|-k+c_2b.
\]

\end{proof}

\begin{lemma}\label{lem:quadratic-spread}
For every integer $k\ge2$, there exists an integer
$r_0=r_0(k)$ such that, for every integer $r\ge r_0$
and every graph $G$ satisfying $s_k(G)>r$, we have
\[
\Delta(G)\ge
\frac{r^2}{64c_2(k)R(k,k)}.
\]
\end{lemma}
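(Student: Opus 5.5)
Fix $r\ge r_0$ and a graph $G$ with $s_k(G)>r$. We choose $t$ of order $r$, say $t=\lceil r/2\rceil$, and run the minimum-edge construction, giving $S$ and $D$ with $|D|=t$. We need $n>t$. This holds because $s_k(G)>r$ forces $n\ge r$: deleting all but $k-1$ vertices is always successful, so $s_k(G)\le n$. Since $|D|+2k(k-1)<r<s_k(G)$ once $r_0$ is large, Corollary~\ref{cor:bounded-multiplicity} applies. Hence every value $x$ is taken by $d_S(v)$ for fewer than $R(k,c_3)$ vertices $v\in D$. Consequently the values $\{d_S(v):v\in D\}$ are spread over at least $t/R(k,c_3)$ distinct integers. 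That gives only a linear spread in $r$, and we need a quadratic one, so the second ingredient is Lemma~\ref{lem:height-width}.

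The key step is to show that the degrees $d_S(v)$, $v\in D$, must have range of order $r^2$. We sort $D$ by $d_S$ and cut it into consecutive blocks $W_1,W_2,\dots$, each of exactly $R(k,k)$ vertices, from the bottom; there are about $t/R(k,k)$ blocks. For a block $W_j$ let $a_j$ be its minimum value of $d_S$ and $b_j$ its width. Let $\delta=\min_{v\in D}d_S(v)$. If $a_j-\delta\ge c_2b_j+k$, then Lemma~\ref{lem:height-width} gives $s_k(G)\le |D|-k+c_2b_j$. Since $s_k(G)>r\ge 2t-1$, this forces $b_j\ge (r-t)/c_2\gtrsim r/(2c_2)$. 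Otherwise $a_j-\delta<c_2b_j+k$. In both cases each block with $a_j-\delta\ge k$ either has width at least about $r/(2c_2)$, or its height $a_j-\delta$ is controlled by its width as $b_j>(a_j-\delta-k)/c_2$.

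We then argue by induction up the blocks. Heights are increasing and $a_{j+1}\ge a_j+b_j$, so a large width eventually forces a large height. Concretely, if the widths are small then $a_{j+1}-\delta\ge (a_j-\delta)(1+1/c_2)-O(k)$. So heights grow geometrically until some block has width at least about $r/(2c_2)$. The multiplicity bound from Corollary~\ref{cor:bounded-multiplicity} starts this process, giving height of order $k$ after a bounded number of blocks. After that, all remaining blocks, of which there are still order $t/R(k,k)$, must have width of order $r/c_2$. The reason is that once $a_j-\delta\ge c_2 r$ the alternative $a_j-\delta<c_2b_j+k$ itself forces $b_j$ large. Summing the widths over about $t/(2R(k,k))$ blocks gives a range $\max_D d_S-\min_D d_S\ge r^2/(32c_2R(k,k))$, up to adjusting constants.

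Finally, $d_S(v)\le\Delta(G)$ for every $v\in D$, so the range is at most $\Delta(G)$, giving $\Delta(G)\ge r^2/(64c_2R(k,k))$. The main obstacle is making the growth bookkeeping clean with exactly these constants. Specifically, we must show that only a bounded number of early blocks, $O(c_2\log r)$ or fewer, can have small width, and that this is absorbed by taking $r_0$ large. The first thing to try is simpler. Either some block has height at least $c_2 r$, after which all later blocks are wide. Or all heights stay below $c_2r$, in which case every block is wide or satisfies $b_j>(a_j-\delta-k)/c_2$. In that second case we sum $b_j$ directly, using the fact that $a_j$ increases by at least $b_{j-1}\ge1$ per block; that $b_j\ge 1$ follows from the multiplicity bound once blocks are merged in groups of size $R(k,c_3)$.
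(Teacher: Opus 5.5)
Your overall plan is the paper's. You take $t\approx r/2$ and cut $D$, sorted by $d_S$, into blocks of $R(k,k)$ consecutive vertices. Lemma~\ref{lem:height-width} gives the dichotomy: writing $h_j=a_j-\delta$, each block is either wide, $b_j>(r-t+k)/c_2\ge r/(2c_2)$, or satisfies $h_j<c_2b_j+k$. Corollary~\ref{cor:bounded-multiplicity} gives $h_j\ge 2k$ from a bounded index $i_0$ on, and you finish with geometric growth and summation of widths.

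The gap is the transition from the growth phase to ``all remaining blocks are wide'', which is exactly the bookkeeping you leave open. You let heights grow only through narrow blocks and stop at the first wide block. You then claim every later block has width of order $r/c_2$ because heights are $\ge c_2r$. But a wide block only guarantees a height increase of about $r/(2c_2)$, not a height of $c_2r$. For a following block of height $\approx r/(2c_2)$, the second alternative guarantees width only $\approx r/(2c_2^2)$. So the first wide block is the wrong stopping point.

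Your fallback does not close this. If heights stay below $c_2r$, you sum $b_j>(h_j-k)/c_2$ using only linear growth of $h_j$, with blocks merged to size $R(k,c_3)$. That gives a range of order $r^2/(c_2R(k,c_3)^2)$. This is quadratic and enough for Theorem~\ref{thm:main}, but it is not the constant $64c_2(k)R(k,k)$ in the statement. In the other case you still need the threshold $c_2r$ to be reached at an early index, which requires a growth estimate anyway.

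The missing observation is that \emph{both} alternatives give geometric growth while the height is below a threshold of order $r$. Take the paper's threshold $r/2$. If $i\ge i_0$ and $h_i<r/2$, then either $b_i>r/(2c_2)>h_i/(2c_2)$, or $b_i>(h_i-k)/c_2\ge h_i/(2c_2)$ because $h_i\ge 2k$. In both cases $h_{i+1}\ge h_i+b_i>(1+\frac1{2c_2})h_i$. Hence some $j\le i_0+\lceil\log_{1+1/(2c_2)}(r/(4k))\rceil$ has $h_j\ge r/2$. For every later block, both alternatives give $b_q>r/(4c_2)$, using $(r/2-k)/c_2\ge r/(4c_2)$ for $r\ge 4k$.

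Choose $r_0$ so that this index is at most $m/2$, where $m=\lfloor t/R(k,k)\rfloor\ge r/(4R(k,k))$. Then at least $m/2$ blocks have width $>r/(4c_2)$, and summing gives a range of at least $r^2/(32c_2R(k,k))$, more than required. Your threshold $c_2r$ would also work, with growth factor $1+1/(2c_2^2)$ below it. What matters is counting the blocks before the height crosses the threshold, not the blocks before the first wide one.
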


\begin{proof}
Let $c_2=c_2(k)$ and let $c_3=c_3(k)$ be the constants from Corollary~\ref{cor:bounded-multiplicity} and Lemma~\ref{lem:height-width}.
Define
\[
i_0 = 1+\bigl\lceil 2k(R(k,c_3)-1)/R(k,k)\bigr\rceil .
\]
Choose \(r_0=r_0(k)\) large enough so that for every integer \(r\ge r_0\) the following hold simultaneously:
\begin{itemize}
    \item[(i)] \(r \ge \max\{4k,\, 2R(k,k)\}\);
    \item[(ii)] \(\lfloor r/2\rfloor + 2k(k-1) \le r\);
    \item[(iii)] \(i_0 + \Bigl\lceil \log_{1+\frac{1}{2c_2}}\bigl(\frac{r}{4k}\bigr)\Bigr\rceil \;\le\; \frac12\Bigl\lfloor \frac{\lfloor r/2\rfloor}{R(k,k)}\Bigr\rfloor\).
\end{itemize}
For fixed $k$, the left-hand side of (iii) is $O_k(\log r)$, whereas the right-hand side is $\Theta_k(r)$. Hence an integer $r_0=r_0(k)$ satisfying (i)--(iii) exists.

Now fix any \(r\ge r_0\) and any graph \(G\) with \(s_k(G)>r\).
Since no deletion set of size at most \(r\) can be successful, we must have \(|V(G)|\ge r+k\); otherwise removing all but \(k-1\) vertices gives a successful set of size at most \(r\).  Hence \(|V(G)| > \lfloor r/2\rfloor\).

Set \(t = \lfloor r/2\rfloor\). Choose an \((|V(G)|-t)\)-vertex subset \(S\subseteq V(G)\) that minimizes \(e(G[S])\), and let \(D = V(G)\setminus S\); thus \(|D| = t = \lfloor r/2\rfloor\).
Label the vertices of \(D\) as \(v_1,\dots,v_{|D|}\) with \(d_S(v_1)\le\cdots\le d_S(v_{|D|})\).
Let \(m = \lfloor |D|/R(k,k)\rfloor\).  For \(i=1,\dots,m\), define $$
D_i = \{v_{(i-1)R(k,k)+1},\dots,v_{iR(k,k)}\},
h_i = d_S(v_{(i-1)R(k,k)+1})-d_S(v_1),$$
$$w_i = d_S(v_{iR(k,k)})-d_S(v_{(i-1)R(k,k)+1}).$$
Then for every \(v\in D_i\) we have \(d_S(v_1)+h_i \le d_S(v) \le d_S(v_1)+h_i+w_i\).

Condition (ii) gives \(|D|+2k(k-1)\le r\).  Together with Corollary~\ref{cor:bounded-multiplicity} and \(s_k(G)>r\), this implies that each integer occurs fewer than \(R(k,c_3)\) times among \(d_S(v_1),\dots,d_S(v_{|D|})\).

We claim that for every \(i\in [m]\),
\begin{equation}
w_i>\frac{r}{4c_2}\quad\text{or}\quad h_i < c_2 w_i + k .
\label{eq:block-alternative}
\end{equation}
Indeed, if some \(j\) had \(w_j\le r/(4c_2)\) and \(h_j\ge c_2 w_j+k\), then Lemma~\ref{lem:height-width} with \(a=d_S(v_1)+h_j\) and \(b=w_j\) would give a successful deletion set of size at most \(|D|-k+c_2 w_j \le |D|-k+r/4 < r\), contradicting \(s_k(G)>r\).

Because each degree value repeats at most \(R(k,c_3)-1\) times, at most \(2k(R(k,c_3)-1)\) vertices of \(D\) satisfy \(d_S(v)-d_S(v_1) < 2k\).  By condition (iii) we have \(m \ge 2i_0\), and for all \(i\ge i_0\)
\begin{equation}
h_i \ge 2k . \label{eq:h2k}
\end{equation}

The sequence \(d_S(v_j)\) is nondecreasing, so \(h_{i+1}\ge h_i+w_i\) for \(1\le i<m\).
Take any \(\ell\) with \(i_0\le\ell< m\) and suppose \(h_\ell < r/2\).
If the first alternative of \eqref{eq:block-alternative} holds, then \(w_\ell > r/(4c_2) \ge h_\ell/(2c_2)\).
If the second holds, then \(w_\ell > (h_\ell-k)/c_2 \ge h_\ell/(2c_2)\) by \eqref{eq:h2k}.
Thus in either case
\begin{equation}
h_{\ell+1} \ge h_\ell + w_\ell > \Bigl(1+\frac{1}{2c_2}\Bigr)h_\ell . \label{eq:geometric}
\end{equation}

Let
\[
L_r=
\left\lceil
\log_{1+\frac1{2c_2}}
\left(\frac r{4k}\right)
\right\rceil.
\]
If $h_{i_0}\ge r/2$, take $j=i_0$.
Otherwise, as long as $h_\ell<r/2$, inequality
\eqref{eq:geometric} applies. Hence
\[
h_{i_0+s}\ge
\left(1+\frac1{2c_2}\right)^s h_{i_0}
\]
for all relevant $s$. Since $h_{i_0}\ge2k$, after at most
$L_r$ steps we obtain an index
$j\le i_0+L_r$ with $h_j\ge r/2$.
Condition~(iii) ensures that all these indices are at most
$\lfloor m/2\rfloor$.

Now for every \(m\ge q\ge j\), monotonicity yields \(h_q\ge r/2\).  For such \(q\), \eqref{eq:block-alternative} together with \(r\ge 4k\) gives \(w_q > r/(4c_2)\): either directly, or via \(w_q > (h_q-k)/c_2 \ge (r/2-k)/c_2 \ge r/(4c_2)\).  Because \(j\le \lfloor m/2\rfloor\), at least \(m/2\) of the blocks \(D_i\) satisfy \(w_i > r/(4c_2)\).

All \(w_i\) are non‑negative by construction, hence
\begin{align*}
d_S(v_{mR(k,k)})-d_S(v_1)\ge\sum_{i=1}^m(d_S(v_{iR(k,k)})-d_S(v_{(i-1)R(k,k)+1}))
= \sum_{i=1}^{m} w_i \ge \frac{m}{2}\cdot\frac{r}{4c_2}.
\end{align*}
Since \(|D|\ge mR(k,k)\) and the degrees are sorted,
\[
d_S(v_{|D|})-d_S(v_1) \ge d_S(v_{mR(k,k)})-d_S(v_1) \ge \frac{m}{2}\cdot\frac{r}{4c_2}.
\]
Since $|D|\ge R(k,k)$, we have
$
m=\left\lfloor\frac{|D|}{R(k,k)}\right\rfloor
\ge\frac{|D|}{2R(k,k)}.
$
Therefore,
\[
d_S(v_{|D|})-d_S(v_1) \ge \frac{1}{2}\cdot\frac{|D|}{2R(k,k)}\cdot\frac{r}{4c_2}
\ge \frac{r/4}{4R(k,k)}\cdot\frac{r}{4c_2} = \frac{r^2}{64\,c_2\,R(k,k)},
\]
where we used \(|D| = \lfloor r/2\rfloor \ge r/4\).
Finally, \(d_S(v_{|D|})\le\Delta(G)\) and \(d_S(v_1)\ge0\) imply \(\Delta(G)\ge \frac{r^2}{64\,c_2\,R(k,k)}\), as required.
\end{proof}
\begin{proof}[Proof of Theorem~\ref{thm:main}]
Fix an integer $k\ge2$, and let $G$ be a graph on at least $R(k,k)$ vertices.
Let \(r_0=r_0(k)\) be the constant from Lemma~\ref{lem:quadratic-spread}.
If \(\Delta(G)=0\), then the empty set is successful, so \(s_k(G)=0\) and the bound holds trivially.
Hence we may assume \(\Delta(G)\ge 1\).
If \(s_k(G)\le r_0\), then \(s_k(G)\le r_0\le r_0\sqrt{\Delta(G)}\) because \(\sqrt{\Delta(G)}\ge 1\).
Otherwise \(s_k(G)\ge r_0+1\).  Set \(r=s_k(G)-1\); then \(r\ge r_0\) and \(s_k(G)>r\).
Lemma~\ref{lem:quadratic-spread} yields
\(\Delta(G)\ge \frac{r^2}{64c_2(k)R(k,k)} = \frac{(s_k(G)-1)^2}{64c_2(k)R(k,k)}\).
Solving for \(s_k(G)\) gives
\[
s_k(G)\le 1+\sqrt{64c_2(k)R(k,k)}\sqrt{\Delta(G)}\le\bigl(1+\sqrt{64c_2(k)R(k,k)}\bigr)\sqrt{\Delta(G)}.
\]
Therefore in all cases,
\(s_k(G)\le \max\{r_0,\;1+\sqrt{64c_2(k)R(k,k)}\}\sqrt{\Delta(G)}\).
When \(|V(G)|\ge R(k,k)\) we have \(s_k(G)=f_k(G)\), completing the proof.
\end{proof}

\section*{Declaration on the Use of Generative AI}
The authors used ChatGPT 5.6 Pro to assist in discussing proof strategies, checking proofs, and
improving exposition.

\end{document}